\newtheorem{theorem}{Theorem}[section]
\newtheorem{lemma}[theorem]{Lemma}
\newtheorem{proposition}[theorem]{Proposition}
\newtheorem{corollary}[theorem]{Corollary}
\theoremstyle{definition}
\newtheorem{definition}[theorem]{Definition}
\newtheorem{example}[theorem]{Example}
\theoremstyle{remark}
\newtheorem{remark}[theorem]{Remark}
\numberwithin{equation}{section}
\title[Disjoint Linear Dynamical Properties of Elementary Operators]
{Disjoint Linear Dynamical Properties of Elementary Operators}
\author[S. Ivkovi\'{c}]{Stefan Ivkovi\'{c}}
\address{Mathematical Institute of the Serbian Academy of Sciences and Arts,
	p.p. 367, Kneza Mihaila 36, 11000 Beograd, Serbia}
\email{stefan.iv10@outlook.com}
\author[S.M. Tabatabaie]{Seyyed Mohammad Tabatabaie}
\address{Department of Mathematics, University of Qom, Qom, Iran}
\email{sm.tabatabaie@qom.ac.ir}
\subjclass[2010]{47A16}
\keywords{compact operator, orthonormal basis, d-topologically transitive, d-hypercyclic, trace}
\date{\today}
\begin{document}

 \maketitle

\begin{abstract}
In this paper, we characterize disjoint hypercyclic sequences of elementary operators. Also, we give some sufficient conditions for a sequence of the dual elementary operators to be disjoint topologically transitive. Finally, we give concrete examples and applications. 
\end{abstract}

\baselineskip17pt
 
%=================================================================
\section{Introduction and Preliminaries}
%==================================================================
 In last decades, linear dynamical properties of operators  have been studied in many research articles; see monographs \cite{bmbook,gpbook}, and recent papers \cite{iv2,tab}.  J. B\`es and A. Peris in 2007 \cite{bp07} introduced and studied a new version of disjointness for hypercyclicity, topological transitivity and topological mixing properties regarding a finite sequence of operators on a Fr\'echet space. Especially, they gave in \cite{bp07} some equivalent conditions for powers of weighted shifts on $c_0(\mathbb{Z})$ or $\ell_p(\mathbb{Z})$ ($1\leq p<\infty$) to be densely disjoint hypercyclic.  So far, this topic has been studied intensely in several papers; see for example \cite{sa11,san,bg07,bmp11,bmps12, bms14,bm12}.
 Inspired by the mentioned characterization, disjoint hypercyclicity of weighted translation operators on Lebesgue spaces in the context of locally compact groups was characterized in \cite{chen172,chen181,hl16,zhang17}. Also, the notion of disjoint topological transitivity on weighted Orlicz spaces was studied in \cite{cst,cd18}. In  \cite{saw}, the authors consider disjoint dynamics of weighted translations operators on a wide class of general Banach lattices, and give a characterization for weighted translation operators to be disjoint hypercyclic and topologically mixing on such Banach spaces. Recently, in \cite{iv2} we have characterized hypercyclic wedge operators (which are a special kind of elementary operators studied in papers \cite{bla,fia1,fia2,fia3}) on the space of compact linear operators on a separable Hilbert space. The dynamics of some similar operators have been considered earlier such as conjugate operators, see \cite{pet}, and left multiplication operators, see \cite{ZZD}, \cite{ZLD}, \cite{YRH}.    
  
  Here, we will give some necessary and sufficient conditions for a wedge operator corresponding to unitary and invertible operators  to be disjoint hypercyclic. Also, we give some sufficient conditions for a sequence of the corresponding dual operators to be disjoint topologically transitive and we give some concrete examples and applications to left multiplies. For simplicity, we work only with the notions of disjoint hypercyclicity and disjoint topological transitivity here (for some other disjoint dynamical properties of linear operators, we refer the reader to the recently published monograph \cite{nova}.

%==================================================================
%===================================================================
If $\mathcal X$ is a Banach space, the set of all bounded linear operators from $\mathcal X$ into $\mathcal X$ is denoted by $B(\mathcal X)$. Also, we denote $\Bbb{N}_0:=\Bbb{N}\cup\{0\}$. For convenience of readers, we recall the following definitions from \cite{bp07}.
%===================================================================
%===========================================================
\begin{definition}\label{defd-hyper}
	Let
	$N\geq 2$, and 	$T_{1},T_{2},\ldots,T_{N}$ 
	be bounded linear operators
	acting on a Banach space
	${\mathcal X}$. 
	\begin{enumerate}
		\item The finite sequence $T_{1},T_{2},\ldots,T_{N}$ is called \emph{disjoint hypercyclic} or simply \emph{d-hypercyclic} 
	if there exists an element $x\in \mathcal{X}$
	such that the set 
	\begin{equation}\label{15}
	\{(x,x,\ldots,x),(T_{1}x,T_{2}x,\ldots,T_{N}x),(T_{1}^{2}x,T_{2}^{2}x,\ldots,T_{N}^{2}x),\ldots\}
	\end{equation}
	is dense in $\mathcal{X}^{N}$. In this case, the element $x$ is called 
	a
	\emph{d-hypercyclic vector}.
	\item The finite sequence $T_{1},T_{2},\ldots,T_{N}$ is called \emph{disjoint topologically transitive} or simply \emph{d-topologically transitive} 
	if for any non-empty open subsets $U,V_1,\ldots,V_N$ of $\mathcal X$, 
	there exist a natural number $n\in\mathbb N$ such that
	\begin{equation}\label{00}
	U\cap T_1^{-n}(V_1)\cap\cdots\cap T_N^{-n}(V_N)\neq\varnothing.
	\end{equation}
	\end{enumerate}
\end{definition}
%==========================================================
%==================================================================
\section{Main Results}
%===================================================================
In this section, we assume that $\mathcal H$ is a separable Hilbert space with an orthonormal basis $\{e_j\}_{j\in\mathbb{N}}$. Also, for each $k\in\mathbb{N}$ we denote 
$L_k:=\text{span}\{e_1,\ldots,e_k\}$, and we assume that $P_k$ is the orthogonal projection onto $L_k$.

The set of all bounded linear operators from $\mathcal H$ to $\mathcal H$ is denoted by $B(\mathcal H)$. Also, the set of all compact (finite rank, respectively) elements of $B(\mathcal H)$ is denoted by $B_0(\mathcal H)$ ($B_{00}(\mathcal H)$, respectively).
%=================================================================
\begin{definition}
	Let $U,W\in B(\mathcal H)$. We define the operator $T_{U,W}:B(\mathcal H)\rightarrow B(\mathcal H)$ by
	\begin{equation}\label{elem}
	T_{U,W}(F):=WFU
	\end{equation}
	for all $F\in B(\mathcal H)$.
\end{definition}
%================================================================
Trivially, $T_{U,W}(B_0(\mathcal{H}))\subseteq B_0(\mathcal{H})$. 
%=================================================================
If $U$ and $W$ are invertible, then $T_{U,W}$ is invertible and we have
\begin{equation*}
T^{-1}_{U,W}(F)=W^{-1}FU^{-1},\quad(F\in B(\mathcal H)).
\end{equation*}
In this case, simply we put $S_{U,W}:=T^{-1}_{U,W}$.
%============================================================

In next result, we give an equivalent condition for a sequence of elementary operators such as \eqref{elem} to be disjoint hypercyclic. The condition \eqref{eq000} in the hypothesis of this result plays a key role in its proof. In \cite[Example 1]{iv2} we introduce a class of unitary operators satisfying this property.

%============================================================
\begin{theorem} \label{thm32} 
	Let $W_1,\ldots,W_N$  be invertible bounded linear operators on $\mathcal H$. Let $U\in B(\mathcal H)$ be a unitary operator in $B(\mathcal H)$ such that for each $k\in\mathbb{N}$ there exists an $N_k\in\mathbb{N}$ with
	\begin{equation}\label{eq000}
	U^n(L_k)\perp L_k \quad\text{for all } n\geq N_k.
	\end{equation}
	For each $k\in\mathbb{N}$ denote the operator $T_{U,W_k}$ on $B_0(\mathcal H)$ by $T_k$. Also, assume that $\{r_k\}_{k=1}^N\subseteq\mathbb{N}$ such that $0<r_1<r_2<\ldots<r_N$. 
	Then, the following conditions are equivalent. 
	\begin{itemize}
		\item [{\rm (i)}] The set of all d-hypercyclic vectors of $T^{r_1}_{1},\ldots,T^{r_N}_N$ is dense in $B_0(\mathcal{H})$.
		\item [{\rm (ii)}] For each $m\in\mathbb{N}$ there exist sequences $\{D_k\}_{k=1}^\infty, \{G_k^{(1)}\}_{k=1}^\infty,\ldots, \{G_k^{(N)}\}_{k=1}^\infty$ of operators in $B_0(\mathcal{H})$, and a strictly increasing sequence $\{n_k\}_{k=1}^\infty\subseteq \mathbb{N}$ such that for each $l\in\{1,\ldots,N\}$,
		\begin{equation}\label{cond2}
		\lim_{k\rightarrow\infty}\bigl\|D_k-P_m\bigr\|=\lim_{k\rightarrow\infty}\bigl\|G^{(l)}_k-P_m\bigr\|=0,
		\end{equation}
		\begin{equation}\label{cond22}
		\lim_{k\rightarrow\infty}\bigl\|W^{r_kn_k}_lD_k \bigr\|=\lim_{k\rightarrow\infty}\bigl\|W^{-r_kn_k}_lG^{(l)}_k \bigr\|=0,
		\end{equation}
		and, for each pair of distinct $s,\ l\in\{1,\ldots,N\}$, 
		\begin{equation}\label{cond222}
		\lim_{k\longrightarrow \infty}\bigl\|W^{r_ln_k}_lW^{-r_sn_k}_sG^{(s)}_k\bigr\|=0.
		\end{equation}
	\end{itemize}
\end{theorem}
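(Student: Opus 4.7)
The equivalence $(i)\Leftrightarrow(ii)$ is proved as two separate implications, both making essential use of the structure $T_l(F)=W_lFU$ of elementary operators, the unitarity of $U$, and the orthogonality condition \eqref{eq000}.

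For $(ii)\Rightarrow(i)$, I use the sequences $(D_k,G_k^{(l)},n_k)$ to produce explicit approximants to a dense family of target tuples in $B_0(\mathcal H)^{N+1}$. Given finite rank operators $A_0,A_1,\ldots,A_N$ all lying in $P_mB(\mathcal H)P_m$ for a common $m$, I set
\[
x_k := D_kA_0 + \sum_{s=1}^{N} W_s^{-r_sn_k}\,G_k^{(s)}\,A_s\,U^{-r_sn_k}.
\]
Using the unitarity of $U$ (so that $\|BU^n\|=\|B\|$), the identity $P_mA_j=A_j$, and conditions \eqref{cond2}, \eqref{cond22}, \eqref{cond222}, one checks $x_k\to A_0$ and $T_l^{r_ln_k}(x_k)\to A_l$ for every $l$: in the expansion of $T_l^{r_ln_k}(x_k)$, the $D_kA_0$-term vanishes by \eqref{cond22}, the diagonal $s=l$ term equals $G_k^{(l)}A_l$ and tends to $A_l$, while the $s\ne l$ cross terms vanish by \eqref{cond222}. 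Since $P_mBP_m\to B$ in operator norm for each compact $B$, such tuples are dense in $B_0(\mathcal H)^{N+1}$, so disjoint topological transitivity of $T_1^{r_1},\ldots,T_N^{r_N}$ on $B_0(\mathcal H)$ follows, and the density of d-hypercyclic vectors is then obtained from the B\`es--Peris transitivity theorem.

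For $(i)\Rightarrow(ii)$, fix $m$; (i) is equivalent to disjoint topological transitivity. For each $k$, d-topological transitivity yields, in isolation, some $n$ and a $D$ with $\|D-P_m\|<1/k$ and $\|T_l^{r_ln}D\|<1/k$ for all $l$; and, for each $s$, some $n^{(s)}$ and $H^{(s)}$ with $\|H^{(s)}\|<1/k$ and $\|T_l^{r_ln^{(s)}}H^{(s)}-\delta_{ls}P_m\|<1/k$, whence $G^{(s)}:=T_s^{r_sn^{(s)}}H^{(s)}$ meets the required estimates in isolation. The essential step is to arrange a single $n_k$ that works simultaneously for the $D_k$ configuration and for each of the $N$ configurations determining $G_k^{(l)}$. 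This is where \eqref{eq000} enters: for $n_k$ sufficiently large the identities $P_mU^nP_m=0$ (for $n\ge N_m$) imply that the subspaces $L_m,\,U^{\pm r_1n_k}L_m,\ldots,U^{\pm r_Nn_k}L_m$ are pairwise orthogonal, so the required perturbations $D_k$ and $H_k^{(s)}:=T_s^{-r_sn_k}G_k^{(s)}$ can be placed in disjoint orthogonal directions of $\mathcal H$ without interfering, allowing all $N+1$ configurations to be realized at one $n_k$, which can be taken strictly increasing.

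The main obstacle is precisely this common-sequence construction in the $(i)\Rightarrow(ii)$ direction. Plain d-topological transitivity only provides an $n$ for each prescribed configuration separately, and producing a single $n_k$ that works for $N+1$ configurations at once requires genuinely more structure. The orthogonality hypothesis \eqref{eq000} supplies exactly this structure, in accordance with the remark in the statement that \eqref{eq000} plays a key role in the proof.
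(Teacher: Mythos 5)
Your proof of (ii)$\Rightarrow$(i) is correct and is essentially the paper's argument: your approximant $x_k=D_kA_0+\sum_{s}W_s^{-r_sn_k}G_k^{(s)}A_sU^{-r_sn_k}$ is exactly the paper's $\phi_k=D_kF+\sum_l S_l^{r_ln_k}(G_k^{(l)}E_l)$, and the reduction of dense d-hypercyclicity to d-topological transitivity via B\`es--Peris is the same.

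The direction (i)$\Rightarrow$(ii) has a genuine gap, and the ``main obstacle'' you identify is one you created yourself. You split the target into $N+1$ separate configurations (one for $D$, one for each $H^{(s)}$), each obtained from a separate application of d-transitivity and hence each with its own time $n,n^{(1)},\dots,n^{(N)}$, and then claim that \eqref{eq000} lets you realize all of them at a common $n_k$ because the subspaces $L_m,U^{\pm r_1n_k}L_m,\dots,U^{\pm r_Nn_k}L_m$ are pairwise orthogonal. That orthogonality is true, but it does not do the job: the times produced by transitivity are not under your control, and placing the perturbations in orthogonal subspaces of $\mathcal H$ says nothing about how $T_l^{r_ln}$ acts on each summand at a time different from the one for which that summand was constructed --- the relevant dynamics is governed by the $W_l$, which \eqref{eq000} does not constrain. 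No actual mechanism for synchronizing the $N+1$ times is given, and your sketch never verifies \eqref{cond222} at a common time. The difficulty evaporates if you use the hypothesis as intended: a single application of d-transitivity (equivalently, density of d-hypercyclic vectors) with $O$ and every $V_l$ a small ball around $P_m$ yields one $F_k$ and one $n_k\ge N_m$ with $\|F_k-P_m\|<4^{-k}$ and $\|T_l^{r_ln_k}F_k-P_m\|<4^{-k}$ for all $l$ simultaneously. The paper then manufactures everything from this single $F_k$, setting $D_k:=F_kP_m$ and $G_k^{(l)}:=W_l^{r_ln_k}F_kU^{r_ln_k}P_m$; the real role of \eqref{eq000} is in the norm estimates, where multiplying $T_l^{r_ln_k}F_k-P_m$ on the right by $U^{-r_ln_k}P_m$, respectively $U^{(r_s-r_l)n_k}P_m$, and using $P_mU^{jn_k}P_m=0$ for $j\neq 0$ kills the $P_m$ term and yields \eqref{cond22} and \eqref{cond222}.
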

%=================================================================
\begin{proof}
	$\text{(i)}\Rightarrow\text{(ii)}$: Let the condition ${\rm (i)}$ hold and let $m\in\mathbb{N}$. Then, by Definition \ref{defd-hyper}, there exists an integer $n_1\geq N_m$ and an operator $F_1\in B_0(\mathcal H)$ such that $\|F_1-P_m\|<\frac{1}{4}$ and $\|T_j^{r_jn_1}(F_1)-P_m\|<\frac{1}{4}$ for all $j\in\{1,\ldots,N\}$. Similarly, we can choose an integer $n_2>n_1$ and an operator $F_2\in B_0(\mathcal H)$ such that $\|F_2-P_m\|<\frac{1}{4^2}$ and $\|T_j^{r_jn_2}(F_2)-P_m\|<\frac{1}{4^2}$ for all $j\in\{1,\ldots,N\}$. Inductively, there exist a strictly increasing sequence $\{n_k\}_{k=1}^\infty\subseteq \mathbb{N}$ and a sequence $\{F_k\}_{k=1}^\infty\subseteq B_0(\mathcal H)$ such that $\|F_k-P_m\|<\frac{1}{4^k}$ and $\|T^{r_jn_k}_j(F_k)-P_m\|<\frac{1}{4^k}$ for all $k\in\mathbb{N}$ and $j\in\{1,\ldots,N\}$. Moreover, $n_k\geq N_m$ for all $k$. For each $k\in\mathbb{N}$ and $l\in\{1,\ldots,N\}$ put
	$$D_k:=F_kP_m,\qquad G^{(l)}_k:=W^{r_ln_k}_l F_k U^{r_ln_k} P_m.$$
	Then, we have
	\begin{equation}\label{cond2*}
	\lim_{k\rightarrow\infty}\bigl\|D_k-P_m\bigr\|=\lim_{k\rightarrow\infty}\bigl\|W^{r_kn_k}_lD_k \bigr\|=0
	\end{equation}
	for all $l\in\{1,\ldots,N\}$. Indeed, 
	\begin{align*}
	\frac{1}{4^k}&\geq \|F_k-P_m\|\geq \|F_k-P_m\|\,\|P_m\|\geq\|D_k-P_m\|
	\end{align*}
	and
	\begin{align*}
	\frac{1}{4^k}&\geq \|W^{r_ln_k}F_k U^{r_ln_k}-P_m\|\geq \|W^{r_ln_k}F_k U^{r_ln_k}-P_m\|\, \|U^{-r_ln_k}P_m\|\\
	&\geq \|(W^{r_ln_k}F_k U^{r_ln_k}-P_m)U^{-r_ln_k}P_m\|=\|W^{r_ln_k}D_k\|,
	\end{align*}
	for all $l\in\{1,\ldots,N\}$ and $k\in\mathbb{N}$. Similarly,
	\begin{equation}\label{cond22*}
	\lim_{k\rightarrow\infty}\bigl\|G^{(l)}_k-P_m\bigr\|=\lim_{k\rightarrow\infty}\bigl\|W^{-r_kn_k}_lG^{(l)}_k \bigr\|=0
	\end{equation}
	for all $l\in\{1,\ldots,N\}$.

	To prove the last assertion in (ii), pick distinct $s,\ l\in  \{1,\ldots,N\}$. Then, $r_l-r_s$ is a nonzero integer. Hence, by the relation \eqref{eq000} we have
	$$P_mU^{(r_l-r_s)n_k}P_m=0\qquad
	{\rm and}\qquad P_mU^{(r_s-r_l)n_k}P_m=0$$
	for all $k\in\mathbb{N}$. We get 
	\begin{align*}
	\frac{1}{4^k}&\geq \bigl\|T_l^{r_ln_k}(F_k)-P_m\bigr\|\\
	&=\bigl\|W_l^{r_ln_k}F_kU^{r_ln_k}-P_m\bigr\|\\
	&\geq \bigl\|W_l^{r_ln_k}F_kU^{r_ln_k}-P_m\bigr\| \,\bigl\|U^{(r_s-r_l)n_k}P_m\bigr\|\\
	&\geq\bigl\|W_l^{r_ln_k}F_kU^{r_sn_k}P_m-P_m U^{(r_s-r_l)n_k}P_m\bigr\|\\
	&=\bigl\|W_l^{r_ln_k}F_kU^{r_sn_k}P_m\bigr\|.
	\end{align*}
	Furthermore, we have
	\begin{align*}
	\bigl\|W_l^{r_ln_k}F_kU^{r_sn_k}P_m\bigr\|&=\bigl\|W_l^{r_ln_k}W_s^{-r_sn_k}(W_s^{r_sn_k}F_kU^{r_sn_k}P_m)\bigr\|\\
	&=\bigl\|W_l^{r_ln_k}W_s^{-r_sn_k}G_k^{(s)}\bigr\|,
	\end{align*}
	by the construction of $G_k^{(s)}$. Since $k\in\mathbb{N}$ is arbitrary, the proof of $\text{(i)}\Rightarrow\text{(ii)}$ is complete.
	
	$\text{(ii)}\Rightarrow\text{(i)}$: The main idea in this proof comes from the proof of the part $\text{(ii)}\Rightarrow\text{(i)}$ in  \cite[Theorem 2.12]{saw}. Assume that (ii) holds. By \cite[Proposition 2.3]{bp07} it would be sufficient to show that the sequence $T_1^{r_1},\ldots,T_N^{r_N}$ is $d$-topologically transitive. For this, let $O,V_1,\ldots,V_N$ be nonempty open subsets of $B_0(\mathcal H)$. Since $B_{00}(\mathcal{H})$ is dense in $B_0(\mathcal{H})$, there is a sufficiently large number $m\in\mathbb{N}$ such that $P_mF\in O$ and $P_mE_l\in V_l$ ($l=1,\ldots,N$) for some $F,E_1,\ldots,E_N\in B_0(\mathcal H)$. From the relations in (ii), it is not hard to see that 
	\begin{equation}\label{eq777}
	\lim_{k\longrightarrow \infty}\bigl\|T_l^{r_ln_k}(D_kF)\bigr\|=\lim_{k\longrightarrow \infty} \bigl\|S_l^{r_ln_k}(G_k^{(l)}E_l)\bigr\|=0
	\end{equation}
	for all $l\in\{1,\ldots,N\}$, and 
	\begin{equation}\label{eq888}
	\lim_{k\longrightarrow \infty}\Bigl\|T_l^{r_ln_k}\left(S_s^{r_sn_k}(G_k^{(s)}E_s)\right)\Bigr\|=0
	\end{equation}
	for all distinct $s,\ l\in\{1,\ldots,N\}$. Finally, put 
	$$
	\phi_k:=D_kF+\sum_{l=1}^NS_l^{r_ln_k}\bigl(G_k^{(l)}E_l\bigr).
	$$
	
	% By the relation \eqref{eq000}, there exists $N_m\in\mathbb{N}$ such that such that for each $n\geq N_m$, $U^n(L_m)\perp L_m$. Pick a number $k_0>0$ such that for each $k\geq k_0$ and $l\in\{1,\ldots,N\}$, $r_l n_k\geq N_m$. 
	Then, for each $k\in\mathbb{N}$ we have 
	\begin{align*}
	\|\phi_k-P_mF\|&\leq \|D_kF-P_mF\|+\sum_{l=1}^N\Bigl\|S_l^{r_ln_k}(G_k^{(l)}E_l)\Bigr\|\\
	&\leq \|D_k-P_m\|\,\|F\|+\sum_{l=1}^N\Bigl\|S_l^{r_ln_k}(G_k^{(l)}E_l)\Bigr\|.
	\end{align*}
	
	This implies that $\lim\limits_{k\rightarrow\infty}\phi_k=P_mF$ in $B_0(\mathcal{H})$. Moreover, for each $1\leq l\leq N$,
	\begin{align*}
	\bigl\|T_l^{r_ln_k}(\phi_k)-P_mE_l\bigr\|&\leq\bigl\|T_l^{r_ln_k}(D_kF)\bigr\|+\bigl\|G_k^{(l)}E_l-P_mE_l\bigr\|\\
	&\hspace{3cm}+\sum_{s\neq l,\,s=1}^N \bigl\|T_l^{r_ln_k}\left(S_s^{r_sn_k}(G_k^{(s)}E_s)\right)\bigr\|\\
	&\leq\bigl\|T_l^{r_ln_k}(D_kF)\bigr\|+\bigl\|G_k^{(l)}-P_m\bigr\|\,\|E_l\|\\
	&\hspace{3cm}+\sum_{s\neq l,\,s=1}^N \bigl\|T_l^{r_ln_k}\left(S_s^{r_sn_k}(G_k^{(s)}E_s)\right)\bigr\|.
	\end{align*}
	
	Hence, by the relations \eqref{eq777} and \eqref{eq888}, for each $1\leq l\leq N$ we have 
	$\lim\limits_{k\rightarrow\infty}T_l^{r_ln_k}\phi_k=P_mE_l$ in $B_0(\mathcal{H})$.
	This completes the proof. 
\end{proof}

%==============================================================

\begin{remark}
	One can prove a similar result for the operator $F\mapsto UFW$, where $W$ is invertible and $U$ is a unitary operator satisfying the condition \eqref{eq000}. For this, it would be enough to put $D_k:=P_mF_k$, $G_k^{(l)}:=P_mU^{r_ln_k}F_kW^{r_ln_k}_l$ and replace the relations \eqref{cond22} and \eqref{cond222} by the conditions
	\begin{equation*}
	\lim_{k\rightarrow\infty}\bigl\|D_k W^{r_kn_k}_l \bigr\|=\lim_{k\rightarrow\infty}\bigl\|G^{(l)}_k W^{-r_kn_k}_l \bigr\|=0
	\end{equation*}
	and
	\begin{equation*}
	\lim_{k\longrightarrow \infty}\bigl\|G^{(s)}_k W^{-r_sn_k}_s W^{r_ln_k}_l\bigr\|=0,
	\end{equation*}
	respectively. It follows by passing to the adjoints that if $W_{1}, \dots ,W_{N}$ satisfy the conditions \eqref{cond2}, \eqref{cond22} and \eqref{cond222}, then  $W_{1}^{*}, \dots ,W_{N}^{*}$   satisfy these new conditions.
\end{remark}

%============================================================
\begin{corollary}  \label{co01}
	Let $\tilde U $ be a unitary operator on $H$ satisfying the condition \eqref{eq000}. Then $T_{\tilde{U},W_{1}}^{r_{1}}, \dots , T_{\tilde{U},W_{N}}^{r_{N}}  $ are $d-$hypercyclic on $B_{0}(H) $ if and only if $T_{U,W_{1}}^{r_{1}}, \dots , T_{U,W_{N}}^{r_{N}} $ are $d-$hypercyclic for every unitary operator  $U $ on $ H.$ In other words, $T_{U,W_{1}}^{r_{1}}, \dots , T_{U,W_{N}}^{r_{N}} $ are $d-$hypercyclic on $B_{0}(H) $  for every unitary operator $U $ if and only if the operators $W_{1}, \dots, W_{N} $ satisfy the conditions \eqref{cond2}, \eqref{cond22} and \eqref{cond222}.
\end{corollary}

\begin{proof}
	If $T_{\tilde{U},W_{1}}^{r_{1}}, \dots , T_{\tilde{U},W_{N}}^{r_{N}} $ are $d-$hypercyclic, then by Theorem \ref{thm32} the conditions \eqref{cond2}, \eqref{cond22} and \eqref{cond222} are satisfied. However, then by the proof of the part $(ii) \Rightarrow (i) ,$ $T_{U,W_{1}}^{r_{1}}, \dots , T_{U,W_{N}}^{r_{N}} $ are disjoint hypercyclic for every unitary operator $U $ on $H $ since this part of the proof does not apply the condition \eqref{eq000} for unitary operators on $H.$
\end{proof}
%============================================================
\begin{lemma} \label{lem217}
	If the operators $W_{1}^{r_{1}} , \dots ,  W_{N}^{r_{N}} $ satisfy the conditions \eqref{cond2}, \eqref{cond22} and \eqref{cond222}, then they satisfy $d-$hypercyclicity criterion.
\end{lemma}

\begin{proof}
	Let $t \in \mathbb{N} $ be given. If $O, V_{1}, \dots , V_{t} $ are non empty open subsets of $ \underbrace{ H \oplus \dots \oplus H }_{t} =H^{t} ,$ then there exists a sufficiently large number $m \in \mathbb{N} $ such that 
	$$ (P_{m}x_{1} , \dots , P_{m}x_{t}) \in O,$$ 
	$$ (P_{m}y_{1}^{(1)} , \dots , P_{m}y_{t}^{(1)} )\in V_{1}, $$
	$$\vdots $$
	$$ (P_{m}y_{1}^{(N)} , \dots , P_{m}y_{t}^{(N)} )\in V_{N}, $$
	for some $ \lbrace x_{j} \rbrace_{1 \leq j \leq t}, \lbrace y_{j}^{(1)} \rbrace_{1 \leq j \leq t} , \dots , \lbrace y_{j}^{(N)} \rbrace_{1 \leq j \leq t} $ in $H^{t}.$ For each $k \in \mathbb{N} ,$ let \\
	$ \lbrace h_{j}^{(k)} \rbrace_{1 \leq j \leq t} \in H^{t}$ be given as 
	$$ h_{j}^{(k)} = D_{k}x_{j}+ \sum_{l=1}^{N} S_{l}^{r_{l}n_{k}} G_{k}^{(l)} y_{j}^{(l)} $$ 
	for every $j \in \lbrace 1 , \dots , t \rbrace .$ Then we can proceed as in the proof of Theorem \ref{thm32} part $ii) \Rightarrow i) $ to deduce that the operators $  \underbrace{ W_{1}^{r_{1}} \oplus \dots \oplus W_{1}^{r_{1}} }_{t} , \dots ,  \underbrace{ W_{N}^{r_{N}} \oplus \dots \oplus W_{N}^{r_{N}} }_{t} $ are $d-$topologically transitive on $H^{t} .$ Since $t \in \mathbb{N}$ was chosen arbitrary,  by \cite[Theorem 2.7]{bp07} it follows that the operators $W_{1}^{r_{1}} , \dots , W_{N}^{r_{N}} $ satisfy $d-$hypercyclicity criterion. 
\end{proof}

%=============================================================
\begin{remark} \label{rem 2.0}
	In fact, by applying the same argument as in the proof of Lemma  \ref{lem217} we can extend proof of the part $ii) \Rightarrow i) $ in Theorem \ref{thm32} to deduce that if the operators $W_{1}^{r_{1}} , \dots , W_{N}^{r_{N}}  $ satisfy the conditions \eqref{cond2}, \eqref{cond22} and \eqref{cond222}, then the operators $ \underbrace{ T_{U,W_{1}}^{r_{1}} \oplus \dots \oplus T_{U,W_{1}}^{r_{1}} }_{t}, \dots ,  \underbrace{ T_{U,W_{N}}^{r_{N}} \oplus \dots \oplus T_{U,W_{N}}^{r_{N}}}_{t} $ are $d-$topologically transitive on $ (B_{0}(H))^{t}$ for every unitary operator $U $ on $H.$ Since this holds for arbitrary $t \in \mathbb{N} ,$ by \cite[Theorem 2.7]{bp07} we deduce the following corollary.
\end{remark}

\begin{corollary}
	The operators $W_{1}^{r_{1}} ,  \dots ,  W_{N}^{r_{N}} $ satisfy the conditions \eqref{cond2}, \eqref{cond22} and \eqref{cond222} if and only if the operators $T_{U,W_{1}}^{r_{1}} , \dots , T_{U,W_{N}}^{r_{N}} $ satisfy $d-$hypercyclicity criterion for every unitary operator $U $ on $H.$
\end{corollary}
%=============================================================
\begin{corollary} \label{con C0103} 
	If for each distinct $ s,\ l \in \lbrace 1, \dots , N \rbrace $ we have $$\lim_{k\rightarrow \infty}  \parallel W_{l}^{r_{l}n_{k}} P_{m} \parallel = \lim_{k\rightarrow \infty} \parallel W_{s}^{-r_{s}n_{k}}P_{m}\parallel = \lim_{k\rightarrow \infty} \parallel W_{l}^{r_{l}n_{k}} W_{s}^{-r_{s}n_{k}} P_{m} \parallel =0  ,$$ 
	then $T_{U,W_{1}}^{r_{1}}, \dots , T_{U,W_{N}}^{r_{N}} $ satisfy $d-$hypercyclicity criterion for every unitary operator $U$. 
\end{corollary}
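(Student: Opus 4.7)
The plan is to reduce the statement to the theorem just proved by verifying its condition (ii) with the simplest possible choice of auxiliary sequences. Fix an arbitrary $m \in \mathbb{N}$, and use the strictly increasing sequence $\{n_k\}_{k=1}^\infty$ along which the three hypothesized limits hold. I would then set
\[
D_k := P_m \quad \text{and} \quad G_k^{(l)} := P_m
\]
for every $k \in \mathbb{N}$ and every $l \in \{1, \ldots, N\}$. This is the canonical choice: it forces the "convergence to $P_m$" requirements to become equalities and leaves only the displayed hypotheses to check.

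With this choice, condition \eqref{cond2} is immediate, since $\|D_k - P_m\| = \|G_k^{(l)} - P_m\| = 0$ for every $k$. Substituting these operators into \eqref{cond22} and \eqref{cond222} reduces them term-by-term to the assumptions of the corollary: $\|W_l^{r_l n_k} D_k\| = \|W_l^{r_l n_k} P_m\|$, $\|W_l^{-r_l n_k} G_k^{(l)}\| = \|W_l^{-r_l n_k} P_m\|$, and $\|W_l^{r_l n_k} W_s^{-r_s n_k} G_k^{(s)}\| = \|W_l^{r_l n_k} W_s^{-r_s n_k} P_m\|$. Each of these vanishes by assumption. The only minor subtlety is that the corollary's hypothesis is phrased over distinct pairs $s \neq l$, whereas \eqref{cond22} involves a single index; but since $N \geq 2$, for any fixed $l$ one may choose any $s \neq l$ to extract the required single-index statement from the pair-indexed hypothesis (and symmetrically for the minus exponent).

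Once (ii) is verified in this manner, Theorem \ref{thm32} yields (i), namely that the set of d-hypercyclic vectors of $T_1^{r_1}, \ldots, T_N^{r_N}$ is dense in $B_0(\mathcal{H})$; in particular the sequence is d-hypercyclic. There is essentially no obstacle in this argument: the corollary is a direct specialization obtained by collapsing the auxiliary sequences $\{D_k\}$ and $\{G_k^{(l)}\}$ to the constant sequence $P_m$. All the real work has been absorbed into the $\text{(ii)} \Rightarrow \text{(i)}$ implication of Theorem \ref{thm32}, and the corollary simply repackages that implication in a form that avoids introducing the auxiliary operators.
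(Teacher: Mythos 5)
Your proposal is correct and is exactly the paper's own argument: the paper's proof reads ``Put $D_{k}=G_{k}^{(l)}=P_{m}$ for all $k$ and apply Theorem \ref{thm32}.'' You merely spell out the routine verification of condition (ii) in more detail.
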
	

\begin{proof}
	Put $D_{k}=G_{k}^{(l)}=P_{m} $ for all $k$ and apply Theorem \ref{thm32}.
\end{proof}
%=============================================================

%=============================================================
\begin{example} \label{examp 024} 
	If $H$ is separable Hilbert space, we may enumerate the orthonormal basis for $H$ by the indices in $\mathbb{Z} .$ Let then $ \lbrace e_{j} \rbrace_{j \in \mathbb{Z}} $ be an orthonormal basis for $H.$ Set, for each $m,$ $L_{m}:=Span \lbrace e_{-m},e_{-m+1}, \dots ,e_{m-1},e_{m} \rbrace.$ Then, by \cite{iv2}, there is a bijective correspondence between the set of all aperiodic maps on $\mathbb{Z} $ (where $\mathbb{Z}  $ is equipped with the discrete topology) and the set of all unitary operators on $H$ satisfying the condition \eqref{eq000}. In particular, every translation on $\mathbb{Z} $ gives a rise to a unitary operator on $H$ satisfying the condition \eqref{eq000}.\\
		
	Next, let $r_{1} \in \mathbb{N} $ and $r_{2}=2r_{1} .$ Put $W_{1} $ and $W_{2} $ to be the operators on $H$ defined as 
	$$
	W_{1}(e_{j})=
	\begin{cases}
	2e_{j+1}  & \text{  for } j < 0 ,\ \\
	\dfrac{1}{2}e_{j+1}  & \text{  for } j \geq 0 ; \ \\
	\end{cases} 
	$$
	
	$$
	W_{2}(e_{j})=
	\begin{cases}
	3e_{j+1}  & \text{  for } j < 0 ,\ \\
	\dfrac{1}{3}e_{j+1}  & \text{  for } j \geq 0 . \ \\
	\end{cases} 
	$$
	Then $W_{1} $ and $W_{2} $ are bounded operators. By some calculations, it is not hard to see that for each $m \in \mathbb{N}$ we have 
	
	$$ \parallel W_{1}^{r_{1}n_{k}} W_{2}^{-r_{2}n_{k}} P_{m} \parallel = 
	\parallel W_{1}^{r_{1}n_{k}} W_{2}^{-2r_{1}n_{k}} P_{m} \parallel \leq   $$

	$$
	\leq \dfrac{ 3^{2m}2^{r_{1}n_{k}}}{3^{2r_{1}n_{k}}} < \dfrac {3^{2m}} {3^{r_{1}n_{k}}}  \rightarrow 0  \text{  as } k \rightarrow \infty.$$
	
	Similarly, 
	$$\parallel W_{2}^{r_{2}n_{k}} W_{1}^{-r_{1}n_{k}} P_{m} \parallel = 
	\parallel W_{2}^{2r_{1}n_{k}} W_{1}^{-r_{1}n_{k}} P_{m} \parallel < \dfrac {3^{2m}} {2^{r_{1}n_{k}}}  \rightarrow 0  \text{  as } k \rightarrow \infty $$ 
	for each $m \in \mathbb{N} .$ Moreover, by the similar arguments as in \cite{iv2}, we can deduce that for each $ m \in \mathbb{N}  $ we have 
	$$\lim_{k\rightarrow \infty}  \parallel W_{1}^{r_{1}n_{k}} P_{m} \parallel = \lim_{k\rightarrow \infty} W_{1}^{-r_{1}n_{k}}P_{m} \parallel = \lim_{k\rightarrow \infty} \parallel W_{2}^{r_{2}n_{k}}  P_{m} \parallel \lim_{k\rightarrow \infty} \parallel W_{2}^{-r_{2}n_{k}}  P_{m} \parallel   =0  .$$ 
	By Corollary \ref{con C0103} it follows that $T_{U,W_{1}}^{r_{1}} $ and $T_{U,W_{2}}^{r_{2}}  $ satisfy $d-$hypercyclicity criterion for every unitary operator $U.$
	
	In general, consider $\ell_{2}(\mathbb{Z}) $ and let $ \lbrace z_{j} \rbrace$ be the natural orthonormal basis for $\ell_2 (\mathbb{Z})  .$ Let $ V $ be the unitary operator from $ \ell_2 (\mathbb{Z})$ onto $ H $ given by $Vz_{j}=e_{j} $ for all $j \in \mathbb{Z} .$ By the previous arguments it follows that if $\tilde{T_{1}}^{r_{1}} , \dots , \tilde{T_{N}}^{r_{N}} $ are disjoint hypercyclic weighted  shifts on $\ell_2 (\mathbb{Z}) ,$ then $T_{1}^{r_{1}} , \dots , T_{N}^{r_{N}} $ satisfy $d-$hypercyclicity criterion on $B_{0}(H) $ where $T_{i}^{r_{i}}(F)=V \tilde{T_{i}}^{r_{i}}V^{*}FU^{r_{i}} $ for all $i \in \lbrace 1, \dots , N \rbrace $ and $ F \in B_{0} (H)$ and $U$ is an arbitrary unitary operator on $H.$ For more details about disjoint hypercyclic  weighted shifts, see \cite{bms14}, \cite{bp07}.
\end{example}
%=============================================================
\begin{proposition} \label{pro02}
	Suppose that there exist dense subsets $H_{0},H_{1}, \dots , H_{N} $ of $H$ and a strictly increasing sequence $\lbrace n_{k} \rbrace_{k} $ such that $ W_{l}^{r_{l}n_{k}} \stackrel{k \rightarrow \infty }{\longrightarrow} 0 $ pointwise on $H_{0} $ for every $l \in \lbrace 1, \dots , N \rbrace $ and $ W_{l}^{-r_{l}n_{k}} \stackrel{k \rightarrow \infty }{\longrightarrow} 0$ pointwise on $H_{l}$ for every $l \in \lbrace 1, \dots , N \rbrace.$ Suppose in addition that $  W_{l}^{r_{l}n_{k}} W_{s}^{-r_{s}n_{k}}  \stackrel{k \rightarrow \infty }{\longrightarrow} 0 $ pointwise on $H_{s} $ whenever $s \neq l .$ Then $T_{U,W_{1}}^{r_{1}}, \dots T_{U,W_{N}}^{r_{N}}  $ satisfy $d-$hypercyclicity criterion for every unitary operator $ U.$
\end{proposition}

\begin{proof}
	The main idea in this proof is motivated by the proof of \cite[Proposition 3.3]{ZZD}. Given $m \in N ,$ for every $j \in \lbrace -m, \dots , m \rbrace $ we can find sequences $$\lbrace {}^{(j)} f_{i} \rbrace_{i} , \lbrace {}^{(j)} g_{i}^{(1)} \rbrace_{i} , \dots ,  \lbrace {}^{(j)}g_{i}^{(N)} \rbrace_{i}  $$ 
	such that $\lbrace {}^{(j)} f_{i} \rbrace_{i}  \subseteq  H_{0},\lbrace {}^{(j)} g_{i}^{(l)} \rbrace_{i} \subseteq H_{l}  $ for every $ l\in \lbrace 1, \dots N \rbrace ,$ $ {}^{(j)} f_{i} \rightarrow e_{j} $   and $ {}^{(j)} g_{i}^{(l)} \rightarrow e_{j} $ as $i \rightarrow \infty $ for every $j \in \lbrace -m, \dots , m \rbrace  $ and $l \in \lbrace 1, \dots , N \rbrace  .$ By the assumption, we can find a subsequence $ \lbrace n_{k_{i}} \rbrace_{i}$ of $ \lbrace n_{k} \rbrace_{k}$ such that $$\parallel W_{l}^{r_{l}n_{k_{i}}} \text{ }{}^{(j)}f_{i} \parallel < \frac{1}{2mi} , \text{ } 
	\parallel W_{l}^{-r_{l}n_{k_{i}}} \text{ }{}^{(j)}g_{i}^{(l)} \parallel < \frac{1}{2mi} $$ 
	and
	%strana 3
	$$\parallel W_{l}^{r_{l}n_{k_{i}}}  W_{s}^{-r_{s} n_{k_{i}}}  \text{ }{}^{(j)}g_{i}^{(s)} \parallel < \frac{1}{2mi}   $$ 
	for all $i \in \mathbb{N} ,$ $j \in \lbrace -m, \dots , m \rbrace $   and $s,l \in \lbrace 1, \dots , N \rbrace $ with $s \neq l .$ Indeed, from the assumptions in the proposition, it is not hard to see that we can construct such subsequence $\lbrace n_{k_{i}} \rbrace_{i \in \mathbb{N}} .$ For each $i \in \mathbb{N} ,$ put $D_{i}, G_{i}^{(l)} $ to be the operators on $H$ given by 
	$$D_{i} e_{j}=\begin{cases}
	{}^{(j)}f_{i}  & \text{  for } j \in \lbrace -m, \dots , m \rbrace \,\\
	0  & \text{  else, }  \,\\
	\end{cases}  $$ 
	
	$$G_{i}^{(l)} e_{j}=\begin{cases}
	{}^{(j)}g_{i}^{(l)}  & \text{  for } j \in \lbrace -m, \dots , m \rbrace \,\\
	0  & \text{  else, }  \,\\
	\end{cases}  $$
	where $ l \in \lbrace 1, \dots , N \rbrace .$ It is then not hard to see that 
	$$ \lim_{i \rightarrow \infty} \parallel D_{i}-P_{m} \parallel =  \lim_{i \rightarrow \infty} \parallel G_{i}^{(l)} - P_{m} \parallel = 0$$ 
	and 
	$$ \lim_{i \rightarrow \infty} \parallel W_{l}^{r_{l} n_{k_{i}}} D_{i} \parallel = \lim_{i \rightarrow \infty} \parallel W_{l}^{-r_{l} n_{k_{i}}} G_{i}^{(l)} \parallel = 0. $$ 
	Moreover, for all $s,l \in \lbrace 1, \dots , N \rbrace $ with $s \neq l $ we have
	$$	\lim_{i\longrightarrow \infty}\bigl\|W^{r_l n_{k_{i}}}_lW^{-r_s n_{k_{i}}}_sG^{(s)}_i\bigr\|=0.$$
	Here we use that strong and uniform convergence coincide for operators on a finite dimensional subspace. By part $(ii) \Rightarrow (i) $ in Theorem \ref{thm32} and Remark \ref{rem 2.0} the result follows.
\end{proof}

%=============================================================

\begin{example} \label{ex03}
	Let $H=L^{2}(\mathbb{R}), \text{ } \alpha(t)=t-1 $ for all $t \in \mathbb{R},$ $w_{1}=2 {\mathcal X}_{\mathbb{R^{-}}} + \frac{1}{2} {\mathcal X}_{\mathbb{R^{+}}} $ and $w_{1}=3 {\mathcal X}_{\mathbb{R^{-}}} + \frac{1}{3} {\mathcal X}_{\mathbb{R^{+}}} .$ Choose an $ r_{1} \in \mathbb{N} $ and set $r_{2}=2r_{1} .$ \\
	Let $W_{1}, W_{2} \in B(H) $ be given by $W_{j}(f)=w_{j} \cdot (f \circ \alpha_{j}) $ for all $f \in H $ and $j \in \lbrace 1,2 \rbrace .$ \\
	For every $m \in \mathbb{N} $ and $n > m $ we have 
	
	$$ \sup_{t\in [-m,m]}   \dfrac{\vert \prod_{j=1}^{r_{2}n} (w_{2} \circ \alpha^{r_{1} n - j})(t) \vert}    {\vert  \prod_{j=0}^{r_{1}n-1} (w_{1} \circ \alpha^{j})(t)  \vert} $$
	
	$$= \sup_{t\in [-m,m]} \vert  \text{ } (\prod_{j=0}^{r_{1}n} (w_{2} \circ \alpha^{-j}) ) (\prod_{j=0}^{r_{1}n-1} (\frac{w_{2}}{w_{1}}  \circ \alpha^{j} )) \text{ } \vert  $$
	
	$$\leq \dfrac{3^{m+1}}{3^{r_{1}n-m}} {( \frac{3}{2})}^{r_{1}n} = \dfrac{3^{2m+1}}{2^{r_{1}n}} \stackrel{n \rightarrow \infty }{\longrightarrow} 0 , $$
	
	$$ \sup_{t\in [-m,m]}   \dfrac{ \vert \prod_{j=1}^{r_{1}n} (w_{1} \circ \alpha^{r_{2} n - j})(t) \vert}    { \vert   \prod_{j=0}^{r_{2}n-1} (w_{2} \circ \alpha^{j})(t) \vert} =$$
	
	$$= \sup_{t\in [-m,m]} \vert  \text{ } (\prod_{j=0}^{r_{1}n} (w_{2} \circ \alpha^{j})^{-1} \text{ }) (  \prod_{j=1}^{r_{1}n} (\frac{w_{1}}{w_{2}}  \circ \alpha^{r_{2}n-j} )) \text{ } \vert  $$
	
	$$\leq \dfrac{3^{m+1}}{3^{r_{1}n-m}} {( \frac{2}{3})}^{r_{1}n} \leq \dfrac{3^{2m+1}}{2^{r_{1}n}} \stackrel{n \rightarrow \infty }{\longrightarrow} 0 , $$
	
	%strana 5
	
	$$\sup_{t\in [-m,m]} \vert  \text{ } \prod_{j=0}^{r_{1}n-1} (w_{1} \circ \alpha^{j -r_{1}n}) (t) \text{ } \vert \leq \dfrac{1}{2^{r_{1}n-2m}} \stackrel{n \rightarrow \infty }{\longrightarrow} 0 ,$$ 
	
	$$\sup_{t\in [-m,m]} \vert  \text{ } \prod_{j=0}^{r_{1}n-1} (w_{1} \circ \alpha^{j })^{-1} (t) \text{ } \vert \leq \dfrac{1}{2^{r_{1}n-2m}} \stackrel{n \rightarrow \infty }{\longrightarrow} 0 .$$ 
	
	Likewise we have that
	$$\sup_{t\in [-m,m]} \vert  \text{ } \prod_{j=0}^{r_{2}n-1} (w_{2} \circ \alpha^{j -r_{2}n}) (t) \text{ } \vert  \stackrel{n \rightarrow \infty }{\longrightarrow} 0 ,$$ 
	and
	$$\sup_{t\in [-m,m]} \vert  \text{ } \prod_{j=0}^{r_{2}n-1} (w_{2} \circ \alpha^{j})^{-1} (t) \text{ } \vert  \stackrel{n \rightarrow \infty }{\longrightarrow} 0 ,$$ 	
	
	By some calculations it is not hard to see for every $f$ with supp $f \subseteq [-m,m] $ that 
	$$ \int \vert W_{1}^{r_{1}n}(f) \vert^{2} \,d\lambda \leq \sup_{t\in [-m,m]} \vert  \text{ } \prod_{j=0}^{r_{1}n-1} (w \circ \alpha^{j-r_{1}n}) (t) \text{ } \vert^{2} \parallel f \parallel_{2}^{2} \text{ } \stackrel{n \rightarrow \infty }{\longrightarrow} 0 $$ 
	and  
	$$\int \vert W_{2}^{r_{2}n} W_{1}^{-r_{1}n}(f) \vert^{2} \,d\lambda \leq \sup_{t\in [-m,m]} 
	\dfrac{ \vert \prod_{j=1}^{r_{2}n} (w_{2} \circ \alpha^{r_{1} n - s})(t) \vert^{2}}    { \vert   \prod_{j=0}^{r_{1}n} (w_{1} \circ \alpha^{j})(t) \vert^{2}} \stackrel{n \rightarrow \infty }{\longrightarrow} 0 . $$
	
	Similarly, by the above relations we can deduce that 
	$$  W_{2}^{r_{2}n}f \stackrel{n \rightarrow \infty }{\longrightarrow} 0  \text{ and }  W_{1}^{r_{1}n}  W_{2}^{-r_{2}n}f \stackrel {n \rightarrow \infty }{\longrightarrow} 0.$$ 
	
	Moreover, 
	$$ \int \vert W_{1}^{r_{1}n}(f) \vert^{2} \,d\lambda \leq \sup_{t\in [-m,m]} \vert  \text{ } \prod_{j=0}^{r_{1}n-1} (w \circ \alpha^{j})^{-1} (t) \text{ } \vert^{2} \parallel f \parallel_{2}^{2} \text{ } \stackrel{n \rightarrow \infty }{\longrightarrow} 0  $$ 
	and, similarly, $   W_{2}^{-r_{2}n}f \stackrel{n \rightarrow \infty }{\longrightarrow} 0 .$ Since $m \in \mathbb{N} $ was chosen arbitrary, it follows that the conditions of Proposition \ref{pro02} are satisfied because $C_{c} (\mathbb{R}) $ is dense in $L^{2}(\mathbb{R}) .$ Hence $T_{U,W_{1}}^{r_{1}} $ and $T_{U,W_{2}}^{r_{2}} $ satisfy $d-$hypercyclicity criterion for every unitary operator $ U.$ 
	
	In general, if $ \alpha $ is a translation on $\mathbb{R} $ and $ w_{1}, \dots , w_{N}$ are positive, measurable, bounded weight functions satisfying that $ w_{1}^{-1}, \dots , w_{N}^{-1} $ are also bounded, then we can consider the corresponding sequence $W_{1}^{r_{1}} , \dots ,W_{N}^{r_{N}} $ of weighted translation operators on $L^{2}( \mathbb{R}).$ This sequence would satisfy the conditions of  Proposition \ref{pro02}. if for every $l,s \in \lbrace 1, \dots , N \rbrace $ and $m \in \mathbb{N} $ we have that 
	$$ \lim_{n \rightarrow \infty } \sup_{t\in [-m,m]} \vert  \text{ } \prod_{j=0}^{r_{l}n-1} (w_{l} \circ \alpha^{j-r_{l}n}) (t) \text{ } \vert  =  
	\lim_{n \rightarrow \infty } \sup_{t\in [-m,m]} \vert  \text{ } \prod_{j=0}^{r_{l}n-1} (w_{l} \circ \alpha^{j})^{-1} (t) \text{ } \vert =0 $$ 
	and in addition 
	$$\lim_{n \rightarrow \infty } \sup_{t\in [-m,m]}      \dfrac{ \vert \prod_{j=1}^{r_{l}n} (w_{l} \circ \alpha^{r_{s} n - j})(t) \vert}    { \vert   \prod_{j=0}^{r_{s}n-1} (w_{s} \circ \alpha^{j})(t) \vert} =0 .$$ 
	For more details, see \cite{saw}.
	
\end{example}

%=============================================================
%=============================================================
%\begin{remark}
%	If $T_{1}^{r_{1}}, \dots , T_{N}^{r_{N}} $ satisfy the assumption of Corollary \ref{con C0103}, then these operators satisfy, in fact, the well known $d-$hypercyclicity criterion.\\
%	Indeed, the set $\lbrace P_{m}F \mid F \in B_{0} (H)  $ and $m \in \mathbb{N} \rbrace$ is dense in $B_{0}(H) .$ However, for each $ m \in \mathbb{N}$, and $F \in B_{0}(H) $ we have 
%	\begin{align*}
%	\parallel & T_{l}^{r_{l}n_{k}}T_{s}^{-r_{s}n_{k}} (P_{m}F)\parallel \\&= 
%	\parallel W_{l}^{r_{l}n_{k}}W_{s}^{-r_{s}n_{k}} P_{m}F U^{(r_{l}-r_{s})n_{k}}\parallel
%	\\&=
%	\parallel  W_{l}^{r_{l}n_{k}}W_{s}^{-r_{s}n_{k}} P_{m}F \parallel \\& \leq 
%	\parallel W_{l}^{r_{l}n_{k}}W_{s}^{-r_{s}n_{k}} P_{m} \parallel \parallel F \parallel 
%	\end{align*}
%	whenever $k \in \mathbb{N} $ and $s,\ l \in \lbrace 1, \dots N \rbrace.$
%	Similarly, 
%	$$ \parallel T_{l}^{r_{l}n_{k}}(P_{m}F) \parallel \leq 
%	\parallel W_{l}^{r_{l}n_{k}} P_{m} \parallel \parallel F \parallel \text{  and } \parallel T_{s}^{-r_{s}n_{k}}(P_{m}F) \parallel \leq \parallel W_{s}^{-r_{s}n_{k}} P_{m} \parallel \parallel F \parallel 
%	$$ 
%	for all $s,\ l \in \lbrace 1, \dots , N \rbrace, $ $m \in \mathbb{N}.$
%	Hence $T_{l}^{r_{l}n_{k}}, T_{l}^{-r_{l}n_{k}} $ and $T_{l}^{r_{l}n_{k}}  T_{s}^{-r_{s}n_{k}} $  converge pointwise on a dense subset of $B_{0}(H) .$
%\end{remark}

%=============================================================
\begin{remark}
	Theorem \ref{thm32} is also valid if we replace $B_0(\mathcal{H})$ by the space of all trace class operators equipped with the trace norm which usually is denoted by $(B_1(\mathcal{H}),\|\cdot\|_1)$. In this case, we should replace the norm in the relations \eqref{cond2}-\eqref{cond222} by the trace norm. In order to prove the implication (i)$\Rightarrow$(ii), just recall that $\|FD\|_1\leq \|F\|_1\,\|D\|$ for all $F\in B_1(\mathcal H),\,D\in B(\mathcal H)$, and proceed similar to the proof of Theorem \ref{thm32}. For the proof of (ii)$\Rightarrow$(i), just observe that the set $\{P_mF:\,F\in B_1(\mathcal H),\,\, m\in\mathbb{N}\}$ is dense in $B_1(\mathcal{H})$. To see this, recall first that $B_{00}(\mathcal H)$ is dense to $B_1(\mathcal H)$. Now, if $F\in B_{00}(\mathcal H)$, then $P_{{\rm Im}(F)}$ is a finite rank operator and $P_{{\rm Im}(F)}F=F$. Hence,
	\begin{align*}
	\|P_mF-F\|_1&=\|P_mP_{{\rm Im}(F)}F-P_{{\rm Im}(F)}F\|_1\\
	&\leq \|P_mP_{{\rm Im}(F)}-P_{{\rm Im}(F)}\|\,\|F\|_1\rightarrow 0,
	\end{align*}
	as $m\rightarrow\infty$.
	
	Next, since the convergence in the trace norm coincide with the convergence in the operator norm for operators acting on a finite dimensional subspace, it follows that Corollary \ref{con C0103}, Example \ref{examp 024}, Proposition \ref{pro02} and Example \ref{ex03} are also valid in this case.
	
	The similar statement holds for the space of Hilbert-Schmidt operators equipped with the Hilbert-Schmidt norm.  
\end{remark}

%=============================================================
\begin{proposition} \label{pro216} 
	Let $W_{1} , \dots , W_{N} $ be bounded linear operators on $H$ and $ \lbrace r_{k} \rbrace_{k=1}^{N} \subseteq \mathbb{N} $ such that $0 < r_{1} < \dots < r_{N} .$ If $W_{1}^{r_{1}} , \dots ,  W_{N}^{r_{N}}$ satisfy $d-$hypercyclicity criterion on $H,$ then 
	$T_{U, W_{1}}^{r_{1}} , \dots , T_{U, W_{N}}^{r_{N}} $ 
	satisfy $d-$hypercyclicity criterion on $ B_{0}(H) ,$  $ B_{1}(H) $ and $ B_{2}(H)$
	for every unitary operator $U $ on $ H.$ 
\end{proposition}

\begin{proof}
	If $W_{1}^{r_{1}} , \dots ,  W_{N}^{r_{N}} $ satisfy $d-$hypercyclicity criterion, then there exist dense subsets $H_{0}, \dots , H_{N} $ of $H$ and the mappings $V_{l,k} : H_{l} \rightarrow H \text{ } (1 \leqq l \leqq N, \text{ } k \in \mathbb{N})$ satisfying that $W_{l}^{r_{l}n_{k}} \stackrel{k \rightarrow \infty }{\longrightarrow} 0 $ pointwise on $H_{0} $ for every $l \in \lbrace 1 , \dots , N \rbrace ,$ $V_{l,k} \stackrel{k \rightarrow \infty }{\longrightarrow} 0  $  pointwise on $H_{l} $ for every $l \in \lbrace 1 , \dots , N \rbrace  ,$ $W_{l}^{r_{l}n_{k}}V_{s,k} \stackrel{k \rightarrow \infty }{\longrightarrow} 0$ pointwise on $H_{s} $ whenever $s \neq l, \text{ } s,l \in \lbrace 1 , \dots , N \rbrace $ and $W_{l}^{r_{l}n_{k}}V_{l,k}  \stackrel{k \rightarrow \infty }{\longrightarrow} Id_{H_{l}} $
	%	strana 2
	pointwise on $H_{l} $ for every $l \in \lbrace 1 , \dots , N \rbrace .$ By the similar arguments as in the proof of Proposition \ref{pro02}, given $m \in \mathbb{N} ,$ for $j \in \lbrace -m , \dots , m \rbrace $ we can construct sequences $\lbrace {}^{(j)}f_{i} \rbrace ,$ $\lbrace {}^{(j)}g_{i}^{(1)} \rbrace, $ $ \cdots ,\lbrace {}^{(j)}g_{i}^{(N)} \rbrace $ in $H$ and a subsequence $\lbrace n_{k_{i}} \rbrace_{i} $ of $\lbrace n_{k} \rbrace_{k} $ such that all the properties in the proof of Proposition \ref{pro02} are satisfied exept that in this case we assume that 
	
	$$\parallel V_{l,k_{i}} \text{ }{}^{(j)}g_{i}^{(l)}  \parallel < \frac{1}{2mi}   \text{ and } 
	\parallel W_{l}^{r_{l}n_{k_{i}}} \text{ } V_{s,k_{i}} \text{ }{}^{(j)}g_{i}^{(s)}  \parallel < \frac{1}{2mi} $$  
	for all $i \in \mathbb{N} ,$ and $s,l \in \lbrace 1, \dots , N \rbrace $ with $s \neq l .$ Moreover, these sequences can be constructed in a such way that in addition 
	$$\parallel W_{l}^{r_{l}n_{k_{i}}} \text{ } V_{l,k_{i}} \text{ } {}^{(j)}g_{i}^{(l)} -{}^{(j)}g_{i}^{(l)}  \parallel < \frac{1}{2mi}$$ 
	for all $i \in \mathbb{N} ,$ $l \in \lbrace 1, \dots , N \rbrace  $ and $j \in \lbrace -m, \dots , m \rbrace .$ This follows since 
	$$W_{l}^{r_{l}n_{k}} \text{ } V_{l,k}  \stackrel{k \rightarrow \infty }{\longrightarrow} Id_{H_{l}}  $$ 
	pointwise on $H_{l} $ for every $l \in \lbrace 1, \dots , N \rbrace .$ In exactly the same way as in the proof of Proposition \ref{pro02} we can then construct the finite rank operators $D_{i}, G_{i}^{(l)} .$
	
	Since ${}^{(j)}g_{i}^{(l)} \stackrel{i \rightarrow \infty }{\longrightarrow} e_{i} $ for all $ j \in \lbrace -m, \dots ,m \rbrace $ and $l \in \lbrace 1, \dots , N \rbrace  ,$ there exists an $I >0 $ such that $ \lbrace {}^{(j)}g_{i}^{(l)} \rbrace_{-m \leq j \leq m} $ is a linearly independent set for $i \geq I $ and $l \in \lbrace 1, \dots , N \rbrace .$ For each $i \geq I $ and $l \in \lbrace 1, \dots , N \rbrace ,$ set $\tilde{V}_{l,i} $ to be the operator which is $0 $ on $Span \lbrace {}^{(j)}g_{i}^{(l)} \rbrace_{-m \leq j \leq m}^{\perp} $ and on $Span \lbrace {}^{(j)}g_{i}^{(l)} \rbrace_{-m \leq j \leq m}  $ it is defined as 
	$\tilde{V}_{l,i} ({}^{(j)}g_{i}^{(l)})= V_{l,k_{i}} ({}^{(j)}g_{i}^{(l)}) $ for $ j \in \lbrace -m, \dots ,m \rbrace .$ Again, since for operators acting on a finite dimensional subspace the convergence in operator norm, trace norm and Hilbert. Schmidt norm coincide with the pointwise convergence on orthonormal basis vectors (which are finitely many in this case), it follows that $D_{i}  \stackrel{i \rightarrow \infty }{\longrightarrow} P_{m}$ and $G_{i}^{(l)} \stackrel{i \rightarrow \infty }{\longrightarrow} P_{m} $ for $l \in \lbrace 1, \dots , N \rbrace $ in operator norm, trace norm and Hilbert - Schmidt norm. Similarly, 
	$ W_{l}^{r_{l}n_{k_{i}}} D_{i}  \stackrel{i \rightarrow \infty }{\longrightarrow} 0,$ 
	%$ W_{l}^{r_{l}n_{k_{i}}} \tilde{V}_{l,i} G_{i}^{(l)} \stackrel{i \rightarrow \infty }{\longrightarrow} 0,$ 
	$ W_{l}^{r_{l}n_{k_{i}}} \tilde{V}_{s,i} G_{i}^{(s)} \stackrel{i \rightarrow \infty }{\longrightarrow} 0,$ $\tilde{V}_{l,i} G_{i}^{(l)} \stackrel{i \rightarrow \infty }{\longrightarrow} 0$  
	and 
	$ W_{l}^{r_{l}n_{k_{i}}} \tilde{V}_{l,i} G_{i}^{(l)} \stackrel{i \rightarrow \infty }{\longrightarrow} P_{m} $
	for $s,l \in \lbrace 1, \dots , N \rbrace $ with $s \neq l $ where the convergence is in operator norm, trace norm and Hilbert - Schmidt norm. Then we can proceed further in exactly the same way as in the proof of part $ii)$ implies $i)$ in Theorem \ref{thm32} except that in this case we consider 
	$$\phi_{i} = D_{k_{i}} F + \sum_{l=1}^N  \tilde{V}_{l,i} G_{i}^{(l)} E_{l} .$$ 
	
	In fact, as explained in Remark \ref{rem 2.0} we can extend this proof to obtain that given $t \in \mathbb{N} ,$ the operators  $ \underbrace{  T_{U,W_{1}}^{r_{1}} \oplus \dots \oplus T_{U,W_{1}}^{r_{1}} }_{t} , \dots ,  \underbrace{ T_{U,W_{N}}^{r_{N}} \oplus \dots \oplus T_{U,W_{N}}^{r_{N}} }_{t} $ are $d-$topologically transitive on $(B_{0}(H))^{t} $ for every unitary operator $ U $ on $ H .$ Indeed, if $O,S_{1} , \dots , S_{N} $ are non empty open subsets of $(B_{0}(H))^{t} ,$ then there exists some $m \in \mathbb{N} $ such that 
	$$(P_{m}F_{1}, \dots , P_{m}F_{t}) \in O,  (P_{m}E_{1}^{(1)}, \dots , P_{m}E_{t}^{(1)} ) \in S_{1} , \dots , ( P_{m}E_{1}^{(N)}, \dots , P_{m}E_{t}^{(N)} ) \in S_{t}   $$
	for some sequences $\lbrace F_{j} \rbrace_{1 \leq j \leq t} , \lbrace E_{j}^{(1)} \rbrace_{1 \leq j \leq t} ,  \dots , \lbrace E_{j}^{(N)} \rbrace_{1 \leq j \leq t}   $ in $B_{0}(H) .$ Then we can consider the vectors $\phi_{i} ,$ in $(B_{0}(H))^{t} $ where 
	$$(\phi_{i})_{j}=D_{k}F_{j}+\sum_{l=1}^{N} \tilde{V}_{l.i} G_{i}^{(l)} E_{j}^{(l)} ,$$ 
	for every $ j \in \lbrace 1,  \dots , t \rbrace.$ The approach is the same in the case when $B_{0}(H)$  is replaced by $B_{1}(H) $ or $B_{2}(H) .$ Since $t \in \mathbb{N} $ was arbitrary, by \cite[Theorem 2.7]{bp07} we can deduce that the operators $ T_{U,W_{1}}^{r_{1}} , \dots , T_{U,W_{N}}^{r_{N}} $ satisfy $d-$hypercyclicity criterion on $B_{0}(H) , B_{1}(H)$ and $B_{2}(H) $ for every unitary operator $U $ on $H .$
\end{proof}

%==============================================================
Now we are ready to present the main result in this paper which is motivated by \cite{ZZD}  and \cite{YRH}.

\begin{theorem} \label{the 2.12}
	Let $\tilde{U} $ be a unitary operator on $H$ satisfying the condition \eqref{eq000}. Then $T_{\tilde{U},W_{1}}^{r_{1}}, \dots , T_{\tilde{U},W_{N}}^{r_{1N}} $ satisfy $d-$hypercyclicity criterion on $B_{0}(H) $ if and only if $W_{1}^{r_{1}} , \dots ,  W_{N}^{r_{N}}  $ satisfy $d-$hypercyclicity criterion on $H.$ Hence $W_{1}^{r_{1}} , \dots ,  W_{N}^{r_{N}}  $ satisfy $d-$hypercyclicity criterion on $H$ if and only if $T_{U,W_{1}}^{r_{1}} , \dots , T_{U,W_{N}}^{r_{N}}  $ satisfy $d-$hypercyclicity criterion on $B_{0}(H) $ for every unitary operator $U .$ The similar statements hold if we consider $B_{1}(H) $ or $B_{2}(H) $ instead of $B_{0}(H) .$
\end{theorem}

\begin{proof}
	Combine Lemma \ref{lem217} and Proposition \ref{pro216}. 
\end{proof}	

% ============================================================
Next we give an application of Theorem \ref{the 2.12} to left multipliers. For an operator $T$ in $B(H),$ we will denote the left an the right multiplier by $L_{T} $ and $R_{T} ,$ respectively. 

\begin{corollary}
	Let $W_{1} , \dots , W_{N} $ be invertible bounded linear operators on $H$ and $\lbrace r_{k} \rbrace_{1 \leq k \leq N} \subseteq \mathbb{N} $ such that $0 \leq r_{1} \leq \dots \leq r_{N} .$ Then $L_{W_{1}^{r_{1}}} , \dots , L_{W_{N}^{r_{N}}} $  satisfy $d-$hypercyclicity criterion on $B_{0}(H) $ if and only if $W_{1}^{r_{1}} , \dots , W_{N}^{r_{N}} $  satisfy $d-$hypercyclicity criterion on $H.$ The similar statements hold if we replace $B_{0}(H) $ by $B_{1} (H)$ or $B_{2}(H).$
\end{corollary}

\begin{proof}
	Let $ \tilde{U} $ be a unitary operator on $H$ satisfying the condition \eqref{eq000}. Then we have 
	$$ L_{W_{l}^{r_{l}n}}  R_{\tilde{U}^{r_{l}n}} =  R_{\tilde{U}^{r_{l}n}} L_{W_{l}^{r_{l}n}} = T_{\tilde{U}, W_{l}}^{r_{l}n}  $$ 
	for every $l \in \lbrace 1, \dots ,N \rbrace $ and all $n \in \mathbb{N}.$ If $ L_{W_{l}^{r_{l}}} \dots  L_{W_{N}^{r_{N}}} $  satisfy $d-$hypercyclicity criterion on $ B_{0} (H),$ then there exist dense subsets $X_{0}, X_{1}, \dots , X_{N} $ of $  B_{0} (H) ,$ the mappings $ V_{l,k} : X_{l} \rightarrow B_{0} (H) ( 1 \leq l \leq N)$ and a strictly increasing sequence $ \lbrace n_{k} \rbrace_{k} \subseteq \mathbb{N} $ such that for all $s,l \in \lbrace 1, \dots , N \rbrace $ with $ s \neq l$ we have that 
	$$L_{W_{l}^{r_{l}n_{k}}} \stackrel{k \rightarrow \infty}{\longrightarrow} 0  \text{ pointwise on } X_{0} , \text{ } V_{l,k}  \stackrel{k \rightarrow \infty}{\longrightarrow} 0 \text{ pointwise on } X_{l} ,$$
	$$L_{W_{l}^{r_{l}n_{k}}}  V_{s,k} \stackrel{k \rightarrow \infty}{\longrightarrow} 0 \text{ pointwise on } X_{s} , \text{ } L_{W_{l}^{r_{l}n_{k}}}  V_{l,k} \stackrel{k \rightarrow \infty}{\longrightarrow} Id_{X_{l}} \text{ pointwise on } X_{l}.$$ 
	
	Hence $T_{\tilde{U}, W_{l}}^{r_{l}n_{k}}  \stackrel{k \rightarrow \infty}{\longrightarrow} 0 $ pointwise on $X_{0} $ and $R_{\tilde{U}^{*}}^{r_{l}n_{k}} V_{{l},{k}}  \stackrel{k \rightarrow \infty}{\longrightarrow} 0$ pointwise on $X_{l} $ for every $l \in \lbrace 1, \dots , N \rbrace $ since $R_{\tilde{U}^{*}} $ is an isometry on $B_{0} (H) .$ 
	Moreover, 
	$$ L_{W_{l}^{r_{l}n_{k}}}  V_{l,k} = 
	L_{W_{l}^{r_{l}n_{k}}} R_{\tilde{U}}^{r_{l}n_{k}}  R_{\tilde{U}^{*}}^{r_{l}n_{k}}  V_{{l},{k}} = 
	T_{\tilde{U}, W_{l}}^{r_{l}n_{k}} R_{\tilde{U}^{*}}^{r_{l}n_{k}}  V_{l,k} $$ 
	%strana 5	
	so
	$$ T_{\tilde{U}, W_{l}}^{r_{l}n_{k}} R_{\tilde{U}^{*}}^{r_{l}n_{k}}  V_{l,k} \stackrel{n \rightarrow \infty}{\longrightarrow} Id_{X_{l}} $$	
	pointwise on $X_{l} $ for every $ l \in \lbrace 1, \dots , N \rbrace .$ Finally, again since $R_{\tilde{U}^{*}} $ is an isometry on $B_{0} (H) ,$ for all $s,l \in \lbrace 1, \dots , N \rbrace $ with $ s \neq l$ we have that 
	$$ R_{\tilde{U}^{*}}^{ (r_{l} - r_{s})n_{k}}  L_{W_{l}^{r_{l}n_{k}}}  V_{s,k} \stackrel{k \rightarrow \infty}{\longrightarrow} 0 $$ 
	pointwise on $ X_{s} .$ However, 
	$$  R_{\tilde{U}^{*}}^{ (r_{l} - r_{s})n_{k}}  L_{W_{l}^{r_{l}n_{k}}}  V_{s,k} = T_{\tilde{U} , W_{l}}^{r_{l}n_{k}} R_{\tilde{U}^{*}}^{r_{s}n_{k}}  V_{s,k}  .$$ 
	Thus, the operators $T_{\tilde{U},W_{1}}^{r_{1}} , \dots , T_{\tilde{U},W_{N}}^{r_{N}} $ satisfy $d-$hypercyclicity criterion. By Theorem \ref{thm32} it follows that $W_{1}^{r_{1}}, \dots , W_{n}^{r_{N}} $ satisfy the conditions \eqref{cond2}, \eqref{cond22} and \eqref{cond222}, hence, by Lemma  \ref{lem217} the operators $W_{1}^{r_{1}}, \dots , W_{n}^{r_{N}} $ satisfy $d-$hypercyclity criterion on $H.$ The implication in the opposite direction follows from Proposition \ref{pro216}.
\end{proof}

% ============================================================
Let $(B(H), SOT) $ denote the space $B(H)$ equipped with the strong operator topology. Using that the set $\lbrace P_{m}F \text{ } \vert \text{ } F \in B(H) $ and $m \in \mathbb{N} \rbrace $ is dense in $(B(H), SOT)$ we can proceed in exactly the same way as in the proof of the part $(ii) \Rightarrow (i) $ of Theorem \ref{thm32} to deduce the following. 

\begin{corollary} \label{coro210}
	We have $(ii) $ implies $(i). $\\
	$(i) \text{ } T_{U,W_{1}}^{r_{1}}, \dots , T_{U,W_{N}}^{r_{N}}$ are disjoint topologically transitive in $(B(H), SOT)$ for every unitary operator $U.$ \\
	$(ii)$ For each $m \in \mathbb{N} $ there exist sequences $\lbrace D_{k} \rbrace_{k=1}^{\infty}, \lbrace G_{k}^{(1)} \rbrace_{k=1}^{\infty}, \dots , \lbrace G_{k}^{(N)} \rbrace_{k=1}^{\infty}  $ of operators in $B(H)$ and a strictly increasing sequence $\lbrace n_{k} \rbrace_{k=1}^{\infty} \subseteq \mathbb{N} $ such that for each $l \in \lbrace 1, \dots N \rbrace ,$ 
	$$s- \lim_{n \rightarrow \infty}D_{k}= s- \lim_{n \rightarrow \infty}G_{k}^{(l)}=P_{m}$$ 
	and the conditions  \eqref{cond22} and \eqref{cond222} hold.
	
	In particular, if $W_{1}, \dots , W_{N}$ satisfy the conditions \eqref{cond2}, \eqref{cond22} and \eqref{cond222}, then  $T_{U,W_{1}}^{r_{1}}, \dots , T_{U,W_{N}}^{r_{N}}$ are disjoint topologically transitive in $(B(H), SOT)$ for every unitary operator U.
\end{corollary}	

%==============================================================
We denote the dual of $(B(\mathcal{H}),{\rm SOT})$ equipped with the weak-star topology  by $B(\mathcal{H})^*$.
If $\varphi\in B(\mathcal{H})^*$ and $D\in B(\mathcal{H})$, then we define 
$M_D\varphi\in B(\mathcal{H})^*$ by 
$M_D\varphi(F):=\varphi(DF)$ for all $F\in B(\mathcal{H})$. If $T_{U,W}$ is considered as an operator on $B(\mathcal{H})$, then we have
$T_{U,W}^*(\varphi)=\varphi_W\circ\widetilde{U}$
and 
$S_{U,W}^*(\varphi)=\varphi_{W^{-1}}\circ\widetilde{U}^{-1}$, where 
for each $D\in B(\mathcal{H})$,  $\widetilde{D}\in B(B(\mathcal{H}))$ is defined by $\widetilde{D}(F):=FD$ for all $F\in B(\mathcal{H})$.

In the following result, we give some sufficient conditions for a sequence of dual operators on $B(\mathcal{H})^*$ to be disjoint topologically transitive.
%=======================================================================
\begin{proposition}\label{thm33}
	Let $W_1,\ldots,W_N$  be invertible bounded linear operators on $\mathcal H$, and $U$ be a unitary operator in $B(\mathcal H)$.
	For each $k\in\mathbb{N}$ denote the operator $T_{U,W_k}$ on $B(\mathcal H)$ by $T_k$. Also, assume that $\{r_k\}_{k=1}^N\subseteq\mathbb{N}$ such that $0<r_1<r_2<\ldots<r_N$.  Then, ${\rm (ii)}$ implies ${\rm (i)}$, where
	\begin{itemize}
		\item [(i)] $T_{1}^{\ast ^{r_1}},\ldots,T_{N}^{\ast ^{r_N}}$ are d-topologically transitive on $B(\mathcal H)^*$.
		\item [(ii)] For every $n\in\mathbb{N}$ there exist sequences of operators $\{D_k\}$, $\{G_k^{(n)}\}$, $\ldots$, $\{G_k^{(N)}\}$ such that $s-\lim_{k\rightarrow\infty} D_k=s-\lim_{k\rightarrow\infty}G_k^{(l)}=P_n$ for all $l\in\{1,\ldots,N\}$, and 
		$$\lim_{k\longrightarrow \infty}\bigl\|D_k W_l^{r_ln_k}\bigr\|=\lim_{k\longrightarrow \infty}\bigl\|G^{(l)}_k W_l^{-r_ln_k}\bigr\|=0$$
		for all $l\in\{1,\ldots,N\}$. Moreover, for each distinct elements $s,\ l\in\{1,\ldots,N\}$, we have 
		$$\lim_{k\longrightarrow \infty}\bigl\|G^{(l)}_k W_s^{-r_sn_k}W_l^{r_ln_k}\bigr\|=0.$$
	\end{itemize}
\end{proposition}
%=================================================================
\begin{proof}
	Let (ii) hold. Let $O,V_1,\ldots,V_N$ be nonempty $w^*$-open subsets of $B(\mathcal H)^*$. Pick $\psi\in O$, $\varphi_1\in V_1,\ldots,\ \varphi_N\in V_N$. We have $M_{P_n}\psi\rightarrow\psi$ and $M_{P_n}\varphi_l\rightarrow\varphi_l$ as $n\rightarrow\infty$ in $w^*$-topology for all $l\in\{1,\ldots,N\}$, because $P_nF\rightarrow F$ in the strong topology for each $F\in B(\mathcal{H})$. 
	Therefore, there is an $n\in\mathbb{N}$ such that 
	$M_{P_n}\psi\in O$ and $M_{P_n}\varphi_l\in V_l$ for all $l\in\{1,\ldots,N\}$.

	%Set $K:=L_n$.
	%	We get for all $D,L\in B(\mathcal{H})$,
	%	$$T_{U,W}^{*^{n_k}}(M_D\varphi_1)(L)=\varphi_1(DW^{n_k}LU^{n_k}).$$
	%	Hence, for each $L\in B(\mathcal{H})$ we have
	%	\begin{align*}
	%	|T_{U,W}^{*^{n_k}}(M_{P_nG_k}\varphi_1)(L)|&=|\varphi_1(P_nG_kW^{n_k}LU^{n_k})|.
	%	\end{align*} 
	%	Thus, $T_{U,W}^{*^{n_k}}(M_{P_nG_k}\varphi_1)\rightarrow 0$ as $k\rightarrow\infty$, in $w^*$-topology. Similarly, $$S_{U,W}^{*^{n_k}}(M_{P_nD_k}\varphi_1)\rightarrow 0,\quad T_{U,W}^{*^{n_k}}(M_{P_nG_k}\varphi_2)\rightarrow 0,\quad S_{U,W}^{*^{n_k}}(M_{P_nD_k}\varphi_2)\rightarrow 0,$$
	%	in $w^*$-topology. 

	For each $k\in\mathbb{N}$, put
	%$\psi_k:=M_{P_nG_k}\varphi_1+S_{U,W}^{*^{n_k}}(M_{P_nD_k}\varphi_2)$
	%	and 
	$$\eta_k:=M_{P_nD_k}\psi+\sum_{l=1}^N S_{l}^{*^{r_ln_k}}(M_{P_nG_k^{(l)}}\varphi_l).$$
	After performing some simple calculations, we get that
	$\eta_k\rightarrow M_{P_n}\psi$ and $T_{l}^{*^{r_ln_k}}(\eta_k)\rightarrow M_{P_n}\varphi_l$ in $w^*$-topology as $k\rightarrow\infty$. This completes the proof.
\end{proof}
%=================================================================

\begin{corollary}
	
	If the operators $W_{1}, \dots , W_{N} $ satisfy the conditions \eqref{cond2}, \eqref{cond22} and \eqref{cond222}, then $T_{U,W_{1}^{*}}^{*r_{1}} , \dots , T_{U,W_{N}^{*}}^{*r_{N}} $ are $d-$topologically transitive on $B(H)^{*} $ for every unitary operator $U.$
\end{corollary}

\begin{proof}
	If the conditions \eqref{cond2}, \eqref{cond22} and \eqref{cond222} are satisfied for the operators $W_{1}, \dots , W_{N}, $ by passing to the adjoints it is not hard to see that the conditions of Theorem \ref{thm33} will be satisfied for the operators $W_{1}^{*}, \dots W_{N}^{*}.$
\end{proof}

%==================== 

\begin{remark} \label{rem 2.16}
	If $\ell_2(\mathcal{A})$ denotes the standard Hilbert module over a $C^*$-algebra $\mathcal{A}$, then Theorem \ref{thm32} and Corollary \ref{coro210} can be transferred directly to this case with replacing $B_0(\mathcal{H})$  by the set of all compact $\mathcal{A}$-linear adjointable operators on $\ell_2(\mathcal{A})$, and $B(\mathcal{H})$ by $B^{a} (l_{2}(\mathcal{A}))$ where $B^{a} (l_{2}(\mathcal{A}))$ denotes the set of all bounded $\mathcal{A}$-linear adjointable operators on $\ell_2(\mathcal{A})$.	
	This is because the set 
	$$\lbrace P_{m}F \text{ } \vert \text{ } m \in \mathbb{N} \text{ and } F \in B^{a} (l_{2}(\mathcal{A})) \rbrace $$ 
	is dense in the set of all compact adjointable $\mathcal{A}-$linear operators on $l_{2} (\mathcal{A}) $ in the operator norm, if $\mathcal{A}$ is a unital $C^{*}-$algebra, and is also dense in $B^{a} (l_{2}(\mathcal{A})) $ in the strong operator topology if $\mathcal{A}$ is a general $C^{*}-$algebra. For more details, about $l_{2}(\mathcal{A}) ,$ see \cite{MT}.
	%$B(\ell_2(\mathcal{A}))$, respectively, where  $K(\ell_2(\mathcal{A}))$ and $B(\ell_2(\mathcal{A}))$ are the set of all compact and all bounded $\mathcal{A}$-linear operators on $\ell_2(\mathcal{A})$, respectively.
\end{remark}
%============================================================

\begin{example}
	Let $\mathcal{A} $ be a non-unital $C^{*}-$algebra and suppose that $\lbrace p_{k} \rbrace_{ k \in \mathbb{N}} $ $ \lbrace g_{k}^{(1)}\rbrace_{k} , \dots , \lbrace g_{k}^{(N)}\rbrace_{k} $ are approximate units in $\mathcal{A} .$ Let $\lbrace w_{1}^{(j)} \rbrace_{j}, \dots ,  ,$ $\lbrace w_{N}^{(j)} \rbrace_{j} $ be sequences of invertible elements in $\mathcal{A} $ such that $\parallel w_{l}^{(j)} \parallel , \parallel w_{l}^{(j)-1} \parallel < M $ for all $j \in \mathbb{N} ,$ $l \in \lbrace 1, \dots , N \rbrace $ and some $M > 0 .$ Suppose in addition that for every $l \in \lbrace 1, \dots , N \rbrace $ we have that 
	$$\lim_{n \rightarrow \infty} \parallel w_{l}^{(j)r_{l}n} p_{k} \parallel = 0 \text{ and } \lim_{n \rightarrow \infty} \parallel w_{l}^{(j)-r_{l}{n}} g_{k}^{(l)} \parallel = 0 $$ for all $j,k \in \mathbb{N} .$ 
	
	Moreover, assume that 
	$$\lim_{n \rightarrow \infty} \parallel w_{l}^{(j)r_{l}n}  w_{s}^{(j)-r_{s}n}  g_{k}^{(s)} \parallel = 0 \text{ (*) }$$ 
	whenever $ s \neq l.$ For $l \in \lbrace 1, \dots , N \rbrace ,$ put $W_{l} $ to be the operator on $l_{2} ( \mathcal{A} ) $ given by $W_{l} ( \lbrace x_{j} \rbrace_{j} ) = \lbrace w_{l}^{(j)} x_{j} \rbrace $ for every $ \lbrace x_{j} \rbrace \in l_{2}(\mathcal{A}) .$ It follows then that $W_{l}, W_{l}^{-1} \in B^{a}(l_{2}(\mathcal{A})) $ for each $l \in \lbrace 1, \dots , N \rbrace .$ 
	
	If $\varphi_{k}^{(p)}, \varphi_{k}^{(g_{1})} , \dots ,  \varphi_{k}^{(g_{N})}$ are the operators on $l_{2} (\mathcal{A})$ given by $\varphi_{k}^{(p)}(\lbrace x_{j} \rbrace_{j} )= \lbrace p_{k} x_{j} \rbrace_{j} ,$ 
	$\varphi_{k}^{(g_{l})}(\lbrace x_{j} \rbrace_{j} ) = \lbrace g_{l}^{(j)} x_{j} \rbrace_{j} $ for $l \ \in \lbrace 1, \dots , N \rbrace $ and all $(\lbrace x_{j} \rbrace_{j} ) \in l_{2} (\mathcal{A}) ,$ then clearly these operators are bounded adjointable $\mathcal{A}-$ linear operators on $l_{2} (\mathcal{A}) .$ Given $m \in \mathbb{N} ,$ set $D_{k} = \varphi_{k}^{(p) } P_{m} $ and $G_{k}^{(l)} = \varphi_{k}^{(g_{l})} P_{m} $ for all $k \in \mathbb{N} $ and $  l  \in \lbrace 1, \dots , N \rbrace.$ It is not hard to see that 
	$$s - \lim_{k \rightarrow \infty} D_{k} = s - \lim_{k \rightarrow \infty} G_{k}^{(l)}=P_{m} .$$ 
	In addition, it also follows that 
	$$ \lim_{n \rightarrow \infty} \parallel W_{l}^{r_{l}n} D_{k} \parallel =  \lim_{n \rightarrow \infty} \parallel W_{l}^{-r_{l}n} G_{k}^{(l)} \parallel=0  $$ 
	for all $k \in \mathbb{N} $ and $l \in \lbrace 1, \dots , N \rbrace .$ Moreover due to $(^{*})$ we get that 
	$$\lim_{n \rightarrow \infty} \parallel W_{l}^{r_{l}n}  W_{s}^{-r_{s}n} G_{k}^{(l)} \parallel=0 $$ 
	whenever $s \neq l.$ Hence, it is not hard to construct a strictly increasing sequence $\lbrace n_{k} \rbrace_{k} \subseteq \mathbb{N} $ such that
	% strana 3
	the conditions \eqref{cond22} and \eqref{cond222} hold with respect to that sequence. Then, the corresponding operators $T_{U,W_{1}}^{r_{1}}, \dots T_{U,W_{N}}^{r_{N}} $ are disjoint topologically transitive in the strong operator topology for every unitary operator $U $ on $l_{2}(\mathcal{A}) .$
	
	As a concrete example, we may let $\mathcal{A}=B_{0}(H) .$ If $\mathcal{A}=B_{0}(H) ,$ then for $j=0 ,$ we may let $w_{1}^{(0)} , w_{2}^{(0)}$ be the operators from Example \ref{examp 024} whereas for $j \neq 0 ,$ we may let $w_{1}^{(j)} , w_{2}^{(j)} $ be the operators on $H$ defined by
	
	$$w_{1}^{(j)} e_{i}=\begin{cases}
	(2+\dfrac{1}{\vert j \vert} ) e_{i}   & \text{  for } i<0 \,\\
	\dfrac{1}{2+\dfrac{1} {\vert j \vert} } e_{i}  & \text{  for } i \geq 0  \,\\
	\end{cases}  $$ 
	
	$$w_{2}^{(j)} e_{i}=\begin{cases}
	(3+\dfrac{1}{\vert j \vert} ) e_{i}   & \text{  for } i<0 \,\\
	\dfrac{1}{3+\dfrac{1} {\vert j \vert} } e_{i}  & \text{  for, } i \geq 0  \,\\
	\end{cases}  $$
	where $\lbrace e_{i} \rbrace_{i \in \mathbb{Z}} $ is an orthonormal basis for $H.$
	% strana 4
	Then for each $k \in \mathbb{N} $ we set $p_{k} $ to be the orthonormal projection onto $ Span \lbrace e_{-k}, \dots , e_{k} \rbrace  $ and we put $g_{k}^{(1)}=p_{k} .$ Moreover, we choose an $r_{1} \in \mathbb{N} $ and we let $r_{2}=2r_{1} .$ By the same arguments as in Example \ref{examp 024} we can deduce that the conditions of Corollary   \ref{coro210} are satisfied for the corresponding operators $W_{1}^{r_{1}}$ and $W_{2}^{r_{2}}.$
\end{example}
%============================================================

\begin{remark}
	The equivalent conditions for dense d-hypercyclicity (or equivalently, disjoint topological transitivity) in Theorem \ref{thm32}  can be given in the situation where $B_0(\mathcal{H})$ is replaced by an arbitrary non-unital $C^*$-algebra $\mathcal{A}$, and the set of all finite rank orthogonal projections on $\mathcal{H}$ is replaced by  an approximate unit in $\mathcal{A}$ consisting of projections. See \cite[Remark 3]{iv2} for some details.
\end{remark}

%{\bf Acknowledgments.} The authors would like to thank the referee for useful comments and suggestions.

\vspace{.1in}
\end{document}